\newtheorem*{theorem}{\textit{Theorem}}
\newtheorem{lem}{\textit{Lemma}}
\newtheorem*{cor}{\textit{Corollary}}
\DeclareMathOperator{\Spin}{Spin}
\DeclareMathOperator{\HS}{HS}
\title{On L.\,G.\,Kov\`acs' problem}
\author{A.\,V.\,Vasil'ev and  S.\,V.\,Skresanov}
\begin{document}

\date{\vspace{-5ex}}
{\let\newpage\relax\maketitle}

\begin{abstract}
``Kourovka notebook'' contains the question due to L.\,G.\,Kov\`acs (Problem~8.23): If the dihedral group $D$ of order 18 is a section of a direct product $X\times Y$, must at least one of $X$ and $Y$ have a section isomorphic to~$D$? The goal of our short paper is to give the positive answer to this question provided that $X$ and $Y$ are locally finite. In fact, we prove even more: If a non-trivial semidirect product $D=A\rtimes B$ of a cyclic $p$-group $A$ and a group $B$ of order $q$, where $p$ and $q$ are distinct primes, lies in a locally finite variety generated by a set $\mathfrak{X}$ of groups, then $D$ is a section of a group from~$\mathfrak{X}$.
\end{abstract}

~

``Kourovka notebook'' contains the following question due to L.\,G.\,Kov\`acs \cite[Problem~8.23]{kourovka}: If the dihedral group $D$ of order 18 is a section of a direct product $X\times Y$, must at least one of $X$ and $Y$ have a section isomorphic to~$D$? Recall that a section of a group is a homomorphic image of one of its subgroups. If $X$ and $Y$ are locally finite groups, the positive answer to the question is given by the following

\begin{theorem}
Let $p$ and $q$ be distinct primes and let $D=A\rtimes B$ be a semidirect product of its subgroups $A$ and $B$ such that  $A$ is a cyclic $p$-group, $B$ is a cyclic group of order $q$, and $[A, B]\neq1$. Suppose that $X$ and $Y$ are locally finite groups and $D$ is a section of the direct product $X\times Y$. Then $D$ is a section of either $X$ or~$Y$.
\end{theorem}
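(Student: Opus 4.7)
My plan is to reduce to $X,Y$ finite and then analyze the Goursat decomposition. Since $D$ is $2$-generated by $a,b$, I would pick lifts $\tilde a,\tilde b\in H$, replace $H$ by $\langle\tilde a,\tilde b\rangle$, and replace $X,Y$ by $\pi_X(H),\pi_Y(H)$; by local finiteness these become finite. Setting $K_1=H\cap(X\times\{1\})$ and $K_2=H\cap(\{1\}\times Y)$, both are normal in $H$, commute elementwise, and intersect trivially; put $M_i:=K_iN/N\trianglelefteq D$. Two cases resolve immediately: if $M_1=1$ then $K_1\le N$, so $D\cong H/N$ is a quotient of $H/K_1\cong\pi_Y(H)\le Y$, making $D$ a section of $Y$; if $M_1=D$ then $K_1N=H$, so $D\cong K_1/(K_1\cap N)$ is a section of $K_1\le X$; symmetric statements handle $M_2$. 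Hence one may assume $M_1,M_2$ are proper non-trivial normal subgroups of $D$.

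I would next invoke the classification of proper normal subgroups of $D$: they are exactly the subgroups $A_i=\langle a^{p^i}\rangle$ of $A$ for $1\le i\le n-1$. Indeed, if $M\trianglelefteq D$ is not contained in $A$, then $M$ contains some $a^jb$, hence $[a,a^jb]=a^{1-r}\in M$ (where $bab^{-1}=a^r$, with $r\not\equiv 1\pmod p$); since $\gcd(r-1,p)=1$ this element generates $A$, forcing $M=D$. Consequently $M_1,M_2\le A$, and both contain the unique minimal normal subgroup $A_{n-1}\cong C_p$. Moreover $B$ acts fixed-point-freely on $A$, so $D$ is a Frobenius group.

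The hard part is to derive a contradiction from this residual configuration. The intended attack is to choose lifts $\tilde c\in K_1$ and $\tilde c'\in K_2$ of a generator of $A_{n-1}$, so that $x:=\tilde c\tilde c'{}^{-1}$ is a non-trivial element of $K_1K_2\cap N$. A Sylow-theoretic argument using $\gcd(p,q)=1$ yields an element $\hat b\in H$ of $q$-power order projecting to $b\in B$, and the conjugation action of $\hat b$ on the abelian group $\langle\tilde c,\tilde c'\rangle\cong\langle\tilde c\rangle\times\langle\tilde c'\rangle$, reduced modulo $(K_1\cap N)(K_2\cap N)$, coincides with the Frobenius action of $B$ on $M_1\cap M_2$. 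The idea is to exploit the fact that $\hat b$ acts diagonally on $K_1\times K_2$ and is fixed-point-free on every $\hat b$-invariant quotient, in combination with the Goursat data that identifies the $K_1$- and $K_2$-parts across $N$, to force a contradiction unless one of $K_1,K_2$ lies in $N$. The main technical obstacle is the case when $q\mid |N|$, where a lift of $b\in B$ of exact order $q$ need not exist via Schur–Zassenhaus, and a more delicate Sylow analysis inside the preimage in $H$ of $\langle b\rangle$ is needed to make the contradiction go through.
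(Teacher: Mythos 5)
Your reductions to the finite case, the identification of $K_1,K_2$, and the disposal of the cases $M_i\in\{1,D\}$ all match the paper (as does your description of the normal subgroups of $D$, modulo the small slip that $A$ itself and $1$ are also proper normal subgroups). But the part you label ``the hard part'' is where the entire content of the theorem lives, and what you propose there is not a proof but a hope — and, more seriously, it is aimed at the wrong target. You cannot derive a contradiction from the residual configuration ``$M_1,M_2$ proper and non-trivial,'' because that configuration is perfectly realizable: take $G$ to be the fibre product of two copies of $D$ over $D/A\simeq B$, sitting inside $X\times Y$ with $X=Y=D$; there $K_1N/N=K_2N/N=A$ is proper and non-trivial, and no contradiction is available — the theorem holds in that example only because $D$ happens to be a section of $X$ anyway. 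So any argument in the residual case must \emph{produce} a section isomorphic to $D$ inside $X$ or $Y$, not refute the configuration; your sketch (diagonal action of a lift $\hat b$, fixed-point-freeness on quotients) never explains how the section is extracted, and you yourself flag the case $q\mid|N|$ as unresolved.

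The paper closes this gap with two ideas absent from your proposal. First, a Frattini-argument reduction (replacing the ambient subgroup by the normalizer of a Sylow subgroup of the kernel, combined with the Remak decomposition) lets one assume the kernel $N$ is a $p$-group — this is exactly what dissolves your $q\mid|N|$ obstacle. Second, and crucially, a counting argument: writing $P$ for the (normal) Sylow $p$-subgroup of the ambient group and $Q$ for a Sylow $q$-subgroup, one counts the elements $x\in P$ with $N\langle x\rangle=P$ and the cyclic subgroups they generate, and deduces from $p^{n+k-1}=\sum_i N_i p^{n+i-1}$ that some class $\Omega_j$ of such cyclic subgroups has size prime to $q$, hence contains a $Q$-invariant member $T$. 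Then $R=TQ$ still maps onto $D$, is itself a group of the same shape (cyclic $p$-group by $C_q$, non-trivial action), so its normal subgroups form a chain; since $K_1\cap R$ and $K_2\cap R$ are normal in $R$ and intersect trivially, one of them is trivial, and $R$ — hence a preimage of $D$ — embeds in $X$ or $Y$. Without some replacement for this descent to $R$, your outline does not prove the theorem.
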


As we will see below, it suffices to prove the theorem in the case of finite groups $X$ and $Y$. Note that the theorem for $q=2$ (which means that $D$ is a dihedral group) and finite groups $X$ and $Y$ was proved by the first author earlier, but the paper \cite{vasand}, where it was done, is virtually unavailable. In fact, the part of the proof from \cite{vasand} which does not use the specific structure of the group $D$ is reproduced here almost without any change.

We recall that a variety of groups is locally finite if every finitely generated group from this variety is finite. The direct consequence of our theorem and old G.\,Higman's result \cite[Lemma 4.3]{Higman} is the following

\begin{cor} Let $\mathfrak{L}$ be a locally finite variety generated by a set $\mathfrak{X}$ of groups, and let $D$ be as in the theorem. If $D$ lies in $\mathfrak{L}$, then $D$ is a section of a group from $\mathfrak{X}$. In particular, if $\mathfrak{X}$ consists of a finite number of finite groups and $D$ belongs to the variety generated by $\mathfrak{X}$, then $D$ is a section of a group from~$\mathfrak{X}$.
\end{cor}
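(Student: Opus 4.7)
The plan is to reduce to a finite direct product via G.~Higman's Lemma~4.3 and then peel off one factor at a time using the Theorem. Higman's lemma asserts that if a finite group belongs to a locally finite variety generated by a class $\mathfrak{X}$, then it is a section of some finite direct product $X_1 \times \cdots \times X_n$ with each $X_i \in \mathfrak{X}$. Since $D$ is finite (of order $|A|\,|B|$) and $D \in \mathfrak{L}$, this immediately produces such an expression.

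Next I would argue by induction on $n$. The case $n = 1$ is the desired conclusion. For $n \geq 2$, write the product as $(X_1 \times \cdots \times X_{n-1}) \times X_n$. Each $X_i$ lies in the locally finite variety $\mathfrak{L}$ and is therefore itself locally finite (any finitely generated subgroup of $X_i$ belongs to $\mathfrak{L}$ and hence is finite), and a finite direct product of locally finite groups is locally finite. The two factors thus satisfy the hypotheses of the Theorem, so $D$ is a section of $X_1 \times \cdots \times X_{n-1}$ or of $X_n$; the inductive hypothesis, applied in the former case, exhibits $D$ as a section of some single $X_i \in \mathfrak{X}$.

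For the ``in particular'' clause, it suffices to observe that the variety generated by finitely many finite groups is locally finite: this variety coincides with $\mathrm{var}(G)$ where $G$ is the (finite) direct product of the given groups, and the $d$-generator relatively free group of $\mathrm{var}(G)$ embeds into the finite direct power $G^{|G|^d}$ and so is finite. The substantive work is done by the Theorem together with Higman's lemma; the induction is entirely routine, and I anticipate no real obstacle beyond the mild check that local finiteness is preserved by finite direct products.
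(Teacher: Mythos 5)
Your proposal is correct and follows essentially the same route as the paper: invoke Higman's Lemma 4.3 to realize $D$ as a section of a finite direct product of groups from $\mathfrak{X}$, then apply the Theorem; the paper leaves the induction on the number of factors and the local-finiteness checks implicit, which you merely spell out.
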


It is worth mentioning that such a group $D$ turns out to be critical, i.\,e., it does not lie in the variety generated by its proper sections
(the definition and properties of critical groups can be found, for example, in \cite{critical}).

\medskip

\textbf{Proof of the theorem.} We start with some properties of the group~$D$. Suppose that the order of its subgroup $A$ is $p^n$, and $A=\langle a\rangle$.

\begin{lem}\label{PropOfD}
Proper normal subgroups of $D$ are $p$-groups. In particular, the lattice of normal subgroups of $D$ is a chain.
\end{lem}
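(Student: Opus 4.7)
My plan is to pin down how $B$ acts on $A$, show that the normal closure of $B$ in $D$ equals $D$, and then argue that any normal subgroup of $D$ divisible by $q$ must be all of $D$.

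First, write $A=\langle a\rangle$ with $|a|=p^n$ and $B=\langle b\rangle$ with $|b|=q$, so that $b$ acts on $A$ by $a\mapsto a^r$ for some integer $r$ satisfying $r^q\equiv 1\pmod{p^n}$ and (by $[A,B]\neq 1$) $r\not\equiv 1\pmod{p^n}$. The key structural fact I would establish is $\gcd(r-1,p)=1$, by a short lifting-the-exponent argument: if $r=1+p^k u$ with $1\le k<n$ and $\gcd(u,p)=1$, then every term of $(1+p^k u)^q$ beyond the first two is divisible by $p^{k+1}$, so $r^q\equiv 1+qp^k u\not\equiv 1\pmod{p^{k+1}}$, contradicting $r^q\equiv 1\pmod{p^n}$. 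Hence $r-1$ is a unit modulo $p^n$, so $x\mapsto[x,b]=x^{r-1}$ is an automorphism of $A$. In particular $C_A(B)=1$ and $[A,B]=A$.

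Next, for every $a\in A$ and $b\in B$ the identity $[a,b]=(b^a)^{-1}b$ shows $[a,b]\in B^D$, so $[A,B]\subseteq B^D$; combined with $[A,B]=A$ this gives $A\subseteq B^D$ and $B^D=AB=D$. Now let $N$ be a proper normal subgroup of $D$. If $q\mid|N|$, then by Cauchy $N$ contains an element of order $q$ generating a Sylow $q$-subgroup of $D$, which by Sylow is conjugate to $B$; normality of $N$ then forces $B\subseteq N$, whence $N\supseteq B^D=D$, a contradiction. So $N$ is a $p$-group, and since $A$ is the unique Sylow $p$-subgroup of $D$ we get $N\le A$. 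The subgroups of the cyclic group $A$ form a chain, and adjoining $D$ gives the whole lattice of normal subgroups of $D$, which is therefore a chain.

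The main technical point is the coprime-action fact $[A,B]=A$; once that is in hand, the rest is a one-line normal-closure computation followed by a Sylow argument.
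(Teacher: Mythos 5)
Your proof is correct. Both arguments share the same skeleton -- reduce to a normal subgroup $N$ with $q\mid |N|$, force $B\le N$ by Sylow/normality, and conclude $N\supseteq B^D=D$ -- but you execute the two key sub-steps differently. Where the paper invokes the coprime-action decomposition $A=[A,B]\times C_A(B)$ from Gorenstein and then uses cyclicity of $A$ to get $C_A(B)=1$, you compute the action exponent $r$ explicitly and show $p\nmid r-1$ by a lifting-the-exponent argument, which yields the stronger statement $[A,B]=A$ directly and keeps the proof self-contained. And where the paper establishes $B^D=D$ by counting, namely $|B^D|\ge \bigl|\bigcup_i B^{a^i}\bigr|=1+p^n(q-1)>|D|/2$ (using $N_D(B)=C_A(B)B=B$ to see that $B$ has $p^n$ conjugates), you get the exact equality $B^D=D$ in one line from the identity $[a,b]=(b^a)^{-1}b$ together with $[A,B]=A$. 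Your route trades the citation and the counting estimate for an elementary congruence computation; the paper's route avoids writing down the action at all and its counting trick is the sort of estimate that transfers to situations where the normal closure is harder to compute exactly. Both correctly finish by noting that a proper normal subgroup, being a $p$-group, lies in the unique Sylow $p$-subgroup $A$, whose subgroups form a chain.
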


\begin{proof}
As $B$ acts on $A$ by $p'$-automorphisms, it follows from \cite[Theorem 5.2.3]{gorenstein} that $A$ is a direct product of $[A,B]$ and~$C_A(B)$. Since $A$ is cyclic, it is equal to either $[A,B]$ or $C_A(B)$, but the latter contradicts the hypothesis of the theorem. Thus $C_A(B)=1$.
		
Consider now an arbitrary normal subgroup $N$ of~$D$. Suppose that the order of $N$ is a multiple of~$q$. Then without loss of generality we may assume that $B\leq N$. Since $N$ is normal, we have $B^D \leq N$. On the other hand, $N_G(B)=N_A(B)B=C_A(B)B=B$. It follows that $$|N|\geq|B^D|=|\bigcup_{i=0}^{p^n-1}B^{a^i}|=1+\sum_{i=0}^{p^n-1}(|B^{a^i}|-1)=1+p^n(q-1)>\frac{p^nq}{2}=\frac{|D|}{2},$$
so $N$ coincides with~$D$. Thus each proper normal subgroup of $D$ is a $p$-group. In particular, normal subgroups of $D$ form a chain.
\end{proof}

Set $F=X\times Y$. Let $G$ be a subgroup of $F$, and let $H$ be a normal subgroup of $G$ such that $G/H\simeq D$. Put $S_X=\{x \in X \mid \exists y\in Y : (x,y) \in S\}$ for the projection of a subgroup $S$ of $G$ on $X$ and define the projection $S_Y$ of $S$ on $Y$ in the same way. In particular, $G_X$ and $G_Y$ are projections of $G$ on $X$ and $Y$.

\begin{lem}\label{PropOfXY}
We may assume that $X=G_X$ and $Y=G_Y$ and both of these groups are finite.
\end{lem}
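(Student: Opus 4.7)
The idea is to carry out two independent reductions, each essentially cost-free. First, I would replace $X$ and $Y$ by the projections $G_X$ and $G_Y$: since $G_X\leq X$ and $G_Y\leq Y$, any section of $G_X$ is automatically a section of $X$, and similarly on the $Y$-side, while $G$ itself already sits inside $G_X\times G_Y$. Proving the theorem with $G_X,G_Y$ in place of $X,Y$ therefore suffices, and it costs nothing to assume $X=G_X$ and $Y=G_Y$ from the outset.

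Second, I would exploit local finiteness to pass to a finitely generated piece of $G$. Since $D$ is finite, I would pick finitely many elements $g_1,\dots,g_k\in G$ whose images modulo $H$ generate $D$, and set $G_0=\langle g_1,\dots,g_k\rangle$. Then $G_0H=G$, so $G_0/(G_0\cap H)\simeq G/H\simeq D$, meaning $D$ remains a section of $G_0$. After replacing $G$ by $G_0$ and $H$ by $G_0\cap H$, the group $G$ is finitely generated, so its projections $G_X$ and $G_Y$ are finitely generated subgroups of the locally finite groups $X$ and~$Y$, and hence are finite. Combined with the first reduction, one may shrink $X$ to $G_X$ and $Y$ to $G_Y$ and arrive at the claimed finite setting.

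There is no serious obstacle here; the only point worth a moment's care is that the two reductions are compatible. The equality $G_0H=G$ depends only on the images of the chosen generators spanning $D$ and is undisturbed by restricting the ambient direct product from $X\times Y$ to $G_X\times G_Y$, so the order in which the reductions are performed is irrelevant. The net effect is that in the remainder of the proof one may assume $X$ and $Y$ are finite groups equal to the respective projections of $G$, which is precisely the setting in which the argument from~\cite{vasand} operates.
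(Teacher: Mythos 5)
Your proposal is correct and follows essentially the same route as the paper: first shrink $X,Y$ to the projections $G_X,G_Y$, then replace $G$ by a finitely generated subgroup $G_0$ with $G_0H=G$ (the paper uses two generators $u,v$ since $D$ is $2$-generated, which is immaterial) and invoke local finiteness to conclude finiteness. The only cosmetic difference is that the paper deduces finiteness of $G$ directly from $G\leq F=X\times Y$ being locally finite, whereas you pass through the finitely generated projections; both are valid.
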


\begin{proof} If $G_X$ or $G_Y$ contains a section isomorphic to $D$, so does $X$ or $Y$ respectively. It follows that we may assume that $X=G_X$ and $Y=G_Y$. Now it suffices to prove that we can restrict ourselves to the case of a finite group~$G$. Taking elements $u$ and $v$ of $G$ such that $Hu$ and $Hv$ generate $G/H$, define $\widetilde{G}=\langle u,v\rangle$. Then $G=\widetilde{G}H$, so $D\simeq G/H\simeq \widetilde{G}/(\widetilde{G}\cap H)$. Hence we can replace $G$ by $\widetilde{G}$. Therefore we may assume that $G$ is finitely generated. Since $G$ is a subgroup of the locally finite group~$F$, it must be finite. \end{proof}

Let $K_1=G\cap G_X$ and $K_2=G\cap G_Y$ stand for the kernels of homomorphisms $G\to G_Y$ and $G\to G_X$ respectively.

\begin{lem}\label{PropOfKi}
We may assume that $K_1\cap H=K_2\cap H=1$ and $K_1, K_2$ are $p$-groups.
\end{lem}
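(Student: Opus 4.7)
The plan is to carry out two reductions in sequence. First, I would quotient $G$, $X$, and $Y$ by suitable normal subgroups so that the intersections $K_1\cap H$ and $K_2\cap H$ become trivial. Once that is achieved, I would use Lemma~\ref{PropOfD} to conclude that each $K_i$ is either a $p$-group or isomorphic to $D$, the latter case immediately yielding the desired section inside $X$ or~$Y$.

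For the first reduction, set $N_i=K_i\cap H$. Since $K_i$ is the kernel of a projection (hence normal in $G$) and $H\triangleleft G$, we have $N_i\triangleleft G$. Let $M_1\leq X$ and $M_2\leq Y$ be the (isomorphic) projections of $N_1$ and $N_2$ to the respective factors. Using $X=G_X$ from Lemma~\ref{PropOfXY}, a short conjugation check shows $M_1\triangleleft X$ and $M_2\triangleleft Y$. Set $N=N_1N_2$; since $N_1$ and $N_2$ commute elementwise and meet trivially, $N=M_1\times M_2$ as a subgroup of $F$, and $N\leq H$. The componentwise reduction map $G\to(X/M_1)\times(Y/M_2)$ has kernel $G\cap(M_1\times M_2)=N$, so $G/N$ embeds in $(X/M_1)\times(Y/M_2)$; meanwhile $(G/N)/(H/N)\cong D$, and any section of $X/M_1$ or $Y/M_2$ pulls back to a section of $X$ or $Y$. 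So after replacing the data by its image modulo $N$, I may assume $K_1\cap H=K_2\cap H=1$. The verification here is short: if $(x,y)\in H$ represents an element of the new $K_1\cap H$, then $y\in M_2$ forces $(1,y)\in H$, whence $(x,1)\in H$, so $x\in M_1$ and $(x,y)\in N$.

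For the second reduction, with $K_1\cap H=1$ in hand the quotient map $G\to G/H\cong D$ restricts to an injection on $K_1$, identifying $K_1$ with the normal subgroup $K_1H/H$ of $D$. By Lemma~\ref{PropOfD} this subgroup is either a $p$-group or equals $D$; in the latter case $K_1\cong D$ is already a subgroup of $X$ via the first-coordinate identification, and the theorem holds, so I may assume $K_1$ is a $p$-group. The same dichotomy handles $K_2$. No step looks genuinely difficult; the one place where I would be careful is computing the kernel of the componentwise reduction and checking that the trivial-intersection property really survives the quotient, since it is easy to mis-state these elementary facts.
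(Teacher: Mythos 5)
Your proposal is correct and follows essentially the same route as the paper: the subgroup $N=M_1\times M_2$ you quotient by is exactly the paper's $L_X\times L_Y$ (since $K_i\cap H$ already lies in the corresponding direct factor, it equals its own projection), and the second step — identifying $K_i$ with the normal subgroup $K_iH/H$ of $D$ and invoking Lemma~\ref{PropOfD}, with the case $K_i\simeq D$ finishing the theorem outright — is the paper's argument verbatim. Your explicit verification that the trivial-intersection property survives the quotient is a detail the paper leaves implicit, and it is carried out correctly.
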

\begin{proof}
Note that if $S$ is a normal subgroup of $G$, then its projections $S_X$ and $S_Y$ are normal in~$F$. Indeed, let $(x,y)\in F$ and $(x_0,1)\in S_X $. Then $(x_0,1)^{(x,y)} = (x_0^x,1)$. Since $X=G_X$, there exist $y_0, y_1\in Y $ such that $(x_0,y_0)\in S $ and $(x,y_1)\in G$. Hence $(x_0,y_0)^{(x,y_1)} = (x_0^x,y_0^{y_1})\in S$, so $ (x_0^x,1)\in S_X$. The proof is similar for $S_Y$. Since $H$ is normal in $G$, the groups $H_X$ and $H_Y$ are normal in~$F$.

The groups $L_X=H\cap H_X$ and $L_Y=H\cap H_Y$ are normal in $G$ and coincide with their projections on $X$ and $Y$ respectively, so they are normal in $F$ as well. Consider now the quotient $F/(L_X\times L_Y)$. We have
$$F/(L_X\times L_Y)\simeq G_X/L_X \times G_Y/L_Y \mbox{ and } (G/(L_X \times L_Y))/(H/(L_X \times L_Y))\simeq G/H.$$
So we may assume that $L_X=L_Y =1$ and, consequently, $K_1\cap H=K_2\cap H=1$.

Thus for $i=1,2$, we obtain that $K_i\simeq K_i/(K_i\cap H)\simeq K_iH/H$ and $K_iH$ is normal in $G$, so $K_i$ is isomorphic to a normal subgroup of~$D$. If one of $K_i$ is isomorphic to $D$ then the theorem is proved, so we may suppose that $K_1$ and $K_2$ are isomorphic to proper normal subgroups of~$D$. Lemma~\ref{PropOfD} yields that $K_1$ and $K_2$ are $p$-groups.
\end{proof}

\begin{lem}\label{PropOfH}
We may assume that $H$ is a $p$-group.
\end{lem}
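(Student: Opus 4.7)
The plan is to factor $F$ by a well-chosen normal subgroup so that the image of $H$ becomes a $p$-group, without disturbing the fact that $D$ appears as a section of $F$.

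First I would recall from the proof of Lemma~\ref{PropOfKi} that $H_X$ and $H_Y$ are normal in $F$, and that since $H\cap K_1=H\cap K_2=1$ the projections restrict to isomorphisms of $H$ onto $H_X$ and onto $H_Y$; under each, the characteristic $p'$-subgroup $O_{p'}(H)$ goes into $O_{p'}(H_X)$ and $O_{p'}(H_Y)$ respectively. Being characteristic in normal subgroups of $F$, both $O_{p'}(H_X)$ and $O_{p'}(H_Y)$ are themselves normal in $F$, and so is
\[N=O_{p'}(H_X)\times O_{p'}(H_Y).\]

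The key step is the equality $N\cap G=O_{p'}(H)$. On one hand, $N\cap G$ is a normal $p'$-subgroup of $G$, so its image in $G/H\cong D$ is a normal $p'$-subgroup of $D$. By Lemma~\ref{PropOfD} every proper normal subgroup of $D$ is a $p$-group, and the image cannot be all of $D$; hence it is trivial, giving $N\cap G\leq H$, and therefore $N\cap G\leq O_{p'}(H)$. The reverse inclusion is immediate from $O_{p'}(H)\leq G$ and from the projection statement above.

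Finally I would pass to the quotient $\bar F=X/O_{p'}(H_X)\times Y/O_{p'}(H_Y)$: the image $\bar G=G/O_{p'}(H)$ has normal subgroup $\bar H=H/O_{p'}(H)$ with $\bar G/\bar H\cong D$, and now $\bar H$ is a $p$-group. Since any section of $X/O_{p'}(H_X)$ (respectively $Y/O_{p'}(H_Y)$) isomorphic to $D$ is automatically a section of $X$ (respectively $Y$), I may replace $X,Y,G,H$ by their barred versions and assume that $H$ is a $p$-group. The main obstacle is the verification of $N\cap G=O_{p'}(H)$, which is where Lemma~\ref{PropOfD} and the identifications $H\cong H_X\cong H_Y$ coming from Lemma~\ref{PropOfKi} are combined.
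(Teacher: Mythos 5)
Your construction of $N=O_{p'}(H_X)\times O_{p'}(H_Y)$ and the verification that $N$ is normal in $F$ and that $N\cap G=O_{p'}(H)$ are correct (the forward inclusion via Lemma~\ref{PropOfD}, the reverse via the projection isomorphisms $H\simeq H_X$, $H\simeq H_Y$). The fatal step is the very last claim: $\bar H=H/O_{p'}(H)$ is \emph{not} a $p$-group in general. The quotient of a finite group by its largest normal $p'$-subgroup is a $p$-group only when the group is $p$-nilpotent; if, say, $H\simeq A_5$ (which is compatible with everything established in Lemmas~\ref{PropOfXY} and~\ref{PropOfKi} --- nothing so far constrains the isomorphism type of $H$), then $O_{p'}(H)=1$ and your quotient changes nothing. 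Iterating does not help either, since $O_{p'}(H/O_{p'}(H))=1$ by construction.

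The paper supplies exactly the ingredient you are missing: a Frattini-argument reduction. For $S$ a Sylow $r$-subgroup of $H$ one has $G=HN_G(S)$, hence $D\simeq N_G(S)/(H\cap N_G(S))$, so $G$ may be replaced by $N_G(S)$; after doing this for all primes one may assume every Sylow subgroup of $H$ is normal in $G$, so $H$ is nilpotent and in particular $p$-nilpotent. Only then can the $p'$-part be stripped away --- and the paper does this prime by prime, observing that for $r\neq p$ the normal Sylow $r$-subgroup $S$ satisfies $K_1S\cap K_2S=S$, so by Remak $G/S$ is a subdirect product of $G/K_1S$ and $G/K_2S$, which are sections of $Y$ and $X$ respectively. (Your alternative of factoring $F$ itself by $O_{p'}(H_X)\times O_{p'}(H_Y)$ would also work at that point, but the subdirect structure must be re-established one way or another.) Without first forcing $H$ to be $p$-nilpotent, your reduction does not terminate in a $p$-group, so the proof as written is incomplete.
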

\begin{proof}
Let $r$ be a prime and $S$ a Sylow $r$-subgroup of $H$. By the Frattini argument, $G=HN_G(S)$. It follows that
$$D\simeq G/H = HN_G(S)/H \simeq N_G(S)/(H \cap N_G(S)),$$
so we can replace $G$ by~$N_G(S)$. Therefore, we may assume that Sylow $r$-subgroups of $H$ are normal in $G$ for all primes~$r$. Let now $S$ be a Sylow $r$-subgroup of $H$ with $r\neq p$. Then $K_1S\cap K_2S=S$ and the Remak theorem implies that $G/S$ is a subdirect product of $G/K_1S$ and $G/K_2S$. Note that $D\simeq G/H \simeq (G/S)/(H/S)$ is a section of $G/S$, as well as $G/K_1S\simeq(G/K_1)/(K_1S/K_1)=G_Y/(K_1S/K_1)$ is a section of $Y$ and $G/K_2S$ is a section of~$X$. So it suffices to find a section isomorphic to $D$ in one of $G/K_iS$, $i=1,2,$ and we may suppose that $H$ is an $r'$-group. Repeating the argument for other prime divisors of $|H|$ distinct from $p$ we derive the desired conclusion.
\end{proof}

Let $P$ stand for a Sylow $p$-subgroup and $Q$ for a Sylow $q$-subgroup of~$G$. Clearly $P$ is normal in $G$ and $Q$ is isomorphic to~$B$. Observe that $PQ=G$.

\begin{lem}\label{PropOfT}
There exists a cyclic subgroup $T$ of $P$ such that $HT=P$ and $Q$ normalizes~$T$.
\end{lem}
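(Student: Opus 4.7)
My plan is to prove Lemma~\ref{PropOfT} by induction on $|P|$, reducing the general case to the abelian one. The abstract picture is this: $P$ is a finite $p$-group carrying a coprime action of the cyclic group $Q$ of order $q$, and $H$ is a $Q$-invariant normal subgroup of $P$ with $P/H$ cyclic of order $p^n$ and $C_{P/H}(Q)=1$ (the last coming from Lemma~\ref{PropOfD}, whose proof showed $C_A(B)=1$).

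For the abelian base, I would view $P$ as a $\mathbb{Z}[Q]$-module. Since $\gcd(|Q|,|P|)=1$, coprime-action cohomology vanishes, and the short exact sequence $1\to H\to P\to P/H\to 1$ of $Q$-modules splits $Q$-equivariantly (a Maschke-type statement). The image of a $Q$-equivariant section is the desired $T$: a $Q$-invariant subgroup isomorphic to the cyclic group $P/H$, with $TH=P$ and $T\cap H=1$.

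For the inductive step, assume $P$ is nonabelian. Because $P/H$ is cyclic and hence abelian, $P'\leq H$; nilpotence of $P$ gives $Z(P)\cap P'\neq 1$. I would choose $M$ to be a minimal nontrivial $Q$-invariant subgroup of $Z(P)\cap P'$. Then $M$ is normal in $G$, central in $P$, contained in $H$, and elementary abelian (since $M^p$ is a proper $Q$-invariant subgroup of $M$, so minimality forces $M^p=1$). The hypotheses transfer to the quotient $P/M$ with $H/M$ in place of $H$, and $|P/M|<|P|$. By induction there is a cyclic $Q$-invariant $\bar T\leq P/M$ with $\bar T\cdot(H/M)=P/M$. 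Let $T_0\leq P$ be its preimage: $T_0$ is $Q$-invariant, $T_0H=P$, and $T_0/M$ is cyclic. Since $M\leq Z(P)\cap T_0\leq Z(T_0)$, the quotient $T_0/Z(T_0)$ is cyclic, so $T_0$ is abelian. Applying the abelian base to $T_0$ with its $Q$-invariant subgroup $T_0\cap H$ (noting $T_0/(T_0\cap H)\cong T_0H/H=P/H$ is cyclic) produces a cyclic $Q$-invariant $T\leq T_0$ complementary to $T_0\cap H$, whence $TH=T_0H=P$.

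The main subtlety is guaranteeing that the lifted group $T_0$ is abelian: this is what allows the concluding Maschke-type splitting to deliver a \emph{cyclic} $T$ rather than merely a $Q$-invariant supplement to $T_0\cap H$ in $T_0$. This is precisely why $M$ is chosen inside $Z(P)$, rather than as an arbitrary minimal $Q$-invariant normal subgroup of $P$; the choice inside $P'$ then makes sure $M\leq H$ so that the descent is along a subgroup of $H$ and the quotient $P/H$ is preserved.
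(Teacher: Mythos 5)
Your inductive framework is sound --- the reduction of the nonabelian case to the abelian one via a minimal central $Q$-invariant $M\leq Z(P)\cap P'$, and the observation that the preimage $T_0$ of a cyclic subgroup of $P/M$ is abelian, are both correct --- but it funnels all the content into the abelian base case, and that is exactly where the argument breaks. The claim that the short exact sequence $1\to H\to P\to P/H\to 1$ of $Q$-modules splits $Q$-equivariantly is false: coprimality gives the vanishing of $H^1(Q,\mathrm{Hom}(P/H,H))$, which upgrades a splitting of abelian groups to an equivariant one, but it cannot create a splitting where none exists over $\mathbb{Z}$; Maschke applies to $\mathbb{F}_p[Q]$-modules (elementary abelian groups), not to arbitrary finite abelian $p$-groups. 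Concretely, take $P$ cyclic of order $p^2$, $H=P^p$ of order $p$, and $Q$ acting by a nontrivial $p'$-automorphism (e.g.\ $p=3$, $q=2$, $Q$ inverting $P$, which actually occurs with $G$ dihedral of order $18$, $H$ its subgroup of order $3$, and $D\simeq G/H\simeq S_3$): here $H$ has no complement in $P$ at all, so no $T$ with $T\cap H=1$ exists, yet the lemma holds with $T=P$. This is precisely why the statement asks only for a supplement ($HT=P$) rather than a complement, and producing a cyclic $Q$-invariant supplement is the nontrivial content that your base case skips.

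For comparison, the paper obtains the supplement by counting: it computes the number of elements $x\in P$ with $H\langle x\rangle=P$ in two ways, $p^{n+k-1}(p-1)=\sum_{i}N_ip^{n+i-1}(p-1)$ where $N_i$ is the number of cyclic subgroups of order $p^{n+i}$ generated by such elements, deduces that some $N_j$ is prime to $q$, and extracts a $Q$-fixed subgroup from the conjugation action on that family. If you want to keep a structural argument, a viable repair is to take $T$ minimal among $Q$-invariant subgroups with $HT=P$ and prove it is cyclic: the image of $T\cap H$ in $T/\Phi(T)$ is a proper $Q$-invariant subspace of codimension at most one (properness by the nongenerator property of $\Phi(T)$, the codimension bound because $T/(T\cap H)$ is cyclic), a legitimate application of Maschke to the $\mathbb{F}_p[Q]$-module $T/\Phi(T)$ yields a $Q$-invariant line complementing it, and the preimage of that line is a strictly smaller $Q$-invariant supplement unless $T/\Phi(T)$ has rank one, i.e.\ unless $T$ is cyclic. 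As written, however, the Maschke-type splitting of $P$ itself is a genuine gap.
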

\begin{proof}
Let $\mathcal{G}=\{x\in P \mid H\langle x\rangle=P\} $ be the set of all preimages in $P$ of generators of $P/H\simeq A$.
Then $\mathcal{G}=\bigcup_{i=1}^m Hx_i$, where $\{Hx_i \mid i=1,\ldots,m\}$ is the set of all generators of~$P/H$. As $P/H$ is isomorphic to the cyclic subgroup of order $p^n$ and $H$ is a $p$-group, say $|H|=p^k$, we have
\begin{equation}\label{eq1}
|\mathcal{G}|=|H|m=p^k\varphi(p^n)= p^{n+k-1}(p-1),
\end{equation} where $\varphi$ is the Euler totient function.

Put $\Omega=\{\langle x\rangle \mid x\in\mathcal{G}\}$. If $T\in\Omega$ then $p^n\leq |T|\leq p^{n+k}$. For that reason the set $\Omega$ breaks into the disjoint union of the sets
$\Omega_i=\{T\in\Omega \mid |T| = p^{n+i}\}$ with $i=0,\ldots,k$. It follows right from the definition that generators of groups from $\Omega$ lie in $\mathcal{G}$ and, the other way around, each element of $\mathcal{G}$ generates a group from $\Omega$. Taking into account that two distinct groups from $\Omega$ cannot have the same generators and setting $N_i=|\Omega_i|$, we obtain that
\begin{equation}\label{eq2}
|\mathcal{G}|=\sum_{T\in\Omega}\varphi(|T|)=\sum_{i=0}^{k}\sum_{T\in\Omega_i}\varphi(|T|)=\sum_{i=0}^{k}N_ip^{n+i-1}(p-1).
\end{equation}

Combining (\ref{eq1}) and (\ref{eq2}), we get $p^{n+k-1}=\sum_{i = 0}^{k}N_ip^{n+i-1} $. Hence there exists $j\in\{0,\ldots,k\}$ such that $q$ does not divide $N_{j}$. It follows that among the orbits of the action of $Q$ on $\Omega_{j}$ by conjugation there is a one-element orbit, in other words, there exists a group $T$ from $\Omega_{j}$ with $T^Q=T$. Since $T$ is generated by an element from $\mathcal{G}$, the required equality $HT=P$ holds.
\end{proof}
	
By the previous lemma, $R=TQ$ is a subgroup of $G$ and
$$R/(H\cap R)\simeq HR/H=HTQ/H=PQ/H=G/H\simeq D,$$
so $D$ is a quotient of~$R$. The group $R$ is a semidirect product of the cyclic $p$-subgroup $T$ and the subgroup $Q$ of order $q$, and, clearly, $[T,Q]\neq1$. Thus, applying Lemma~\ref{PropOfD} to $R$ instead of $D$, we conclude that normal subgroups of $ R $ form a chain. It follows that one of the subgroups $K_1\cap R$ and $K_2\cap R$ must be trivial, because they are normal and intersect trivially. This guarantees that $R$ can be embedded into either $X$ or $Y$, which, in turn, means that either $X$ or $Y$ contains a section isomorphic to~$D$. This completes the proof of the theorem.

\medskip

\textbf{Proof of the corollary.} We begin with the observation that the second statement of the corollary follows from the first one, because a variety generated by a finite number of finite groups is locally finite. Now, as $D$ is obviously finitely generated, we apply \cite[Lemma 4.3]{Higman} and conclude that $D$ is a section of a direct product of a finite number of groups from $\mathfrak{X}$. The required assertion now follows from the main theorem.

\medskip

\textbf{Last remarks}. The authors would like to thank Professor Evgeny Khukhro, who about 30 years ago drew the attention of one of them to  Kov\`acs' question. At the same time, Khukhro asked if the answer to the same question (at least in the case of finite groups $X$ and $Y$) is positive for an arbitrary finite group $D$, whose normal subgroups form a chain. We finish the paper with an example of such a group $D$ for which the answer to Kov\`acs' problem turns out to be negative.

Let $l\ge4$ be an even integer, $q$ an odd prime power, and $D$ the orthogonal group $\Omega_{2l}^+(q)$ (for the definitions and notations here and further see, e.g., \cite[\S~1.11]{Carter}). Since the quotient group $P\Omega_{2l}^+(q)$ of $D$ by its center of order $2$ is a simple group, the lattice of normal subgroups of $D$ is a chain. Consider now the spin group $G=\Spin_{2l}(q)$. The center $Z(G)$ of $G$ is an elementary abelian group of order~$4$. The quotient of $G$ by one of three subgroups of order $2$ from $Z(G)$, say $H$, is isomorphic to $D=\Omega_{2l}^+(q)$, while the quotients of $G$ by the other two subgroups, say $K_1$ and $K_2$, are isomorphic to the half spin group $\HS_{2l}(q)$ and are not isomorphic to~$D$. Now the Remak theorem yields that $G$ is a subdirect product of $G/K_1\times G/K_2\simeq HS_{2l}(q)\times HS_{2l}(q)$. Since $D=G/H$ is a section of $G$ and is not a section of $G/K_i$ for $i=1,2$, we get the desired example. We thank Alexander Buturlakin for the helpful hint to consider quasisimple groups.

\noindent{\sl Andrey V. Vasil'ev\\
Sobolev Insitute of Mathematics, 4 Acad. Koptyug avenue,\\
Novosibirsk State University, 2 Pirogova Str.,\\
Novosibirsk, 630090, Russia\\
e-mail: vasand@math.nsc.ru}

~

\noindent{\sl Saveliy V. Skresanov\\
Novosibirsk State University, 2 Pirogova Str.,\\
Novosibirsk, 630090, Russia\\
e-mail: s.skresanov@g.nsu.ru


\begin{thebibliography}{9}

\bibitem{kourovka}
	\emph{Unsolved problems in group theory. The Kourovka Notebook}, 18th edn
    (Eds. V. D. Mazurov and E. I. Khukhro),  Novosibirsk, Sobolev Institute of Mathematics
	(2014).

\bibitem{vasand}
	A.\,V.\,Vasil'ev,
	\emph{On one of Kov\`acs' problems},
	Notes of XXIX All-Union scientific student conference "Student and scientific-technical progress". Mathematics,
	Novosibirsk (1991), 17-21 [in Russian].


\bibitem{Higman}
	G.\,Higman,
	\emph{Some remarks on varieties of groups},
	Quart. J. Math. Oxford Ser. 10
	(1959),
	165-178.

\bibitem{critical}
	L.\,G.\,Kov\`acs, M.\,F.\,Newman,
	\emph{On critical groups},
	J. Austral. Math. Soc. 6
	(1966),
	237-250.

\bibitem{gorenstein}
	D.\,Gorenstein,
	\emph{Finite groups},
	2nd edition,
	N. Y., Chelsea
	(1980).

\bibitem{Carter}
    R.\,W.\,Carter,
    {\em Finite groups of Lie type. Conjugacy classes and complex characters},
    John Wiley and Sons, London
    (1985).

\end{thebibliography}
\end{document}